\newcommand{\qed}{\hfill \mbox{\raggedright $\Box$}}
\DeclareSymbolFont{AMSb}{U}{msb}{m}{n}
\newcommand{\N}{{\mathbb{N}}}
\newcommand{\n}{\tfrac{1}{n}}
\def\mod{\operatorname{mod}}
\renewcommand{\leq}{\leqslant}
\renewcommand{\geq}{\geqslant}
\newtheorem{theorem}{Theorem}
\newtheorem{lemma}{Lemma}
\newtheorem{proposition}{Proposition}
\newtheorem{definition}{Definition}
\newenvironment{proof}{\noindent{\bf Proof}\hspace*{1em}}{\qed\bigskip}
\newenvironment{proof-sketch}{\noindent{\bf Sketch of Proof}\hspace*{1em}}{\qed\bigskip}
\newenvironment{proof-idea}{\noindent{\bf Proof Idea}\hspace*{1em}}{\qed\bigskip}
\newenvironment{proof-of-lemma}[1]{\noindent{\bf Proof of Lemma #1}\hspace*{1em}}{\qed\bigskip}
\newenvironment{proof-attempt}{\noindent{\bf Proof Attempt}\hspace*{1em}}{\qed\bigskip}
\newenvironment{remark}{\noindent{\bf Remark}\hspace*{1em}}{\bigskip}
\def\fnum@figure{{\bf Figure \thefigure}}
\def\fnum@table{{\bf Table \thetable}}
\long\def\@mycaption#1[#2]#3{\addcontentsline{\csname
  ext@#1\endcsname}{#1}{\protect\numberline{\csname
  the#1\endcsname}{\ignorespaces #2}}\par
  \begingroup
    \@parboxrestore
    \small
    \@makecaption{\csname fnum@#1\endcsname}{\ignorespaces #3}\par
  \endgroup}
\def\mycaption{\refstepcounter\@captype \@dblarg{\@mycaption\@captype}}
\def\pr{\mathbb{P}}
\def\bin{{\rm Bin}}
\def\th{\theta}
\def\beq{\begin{equation}}
\def\eeq{\end{equation}}
\def\beqn{\begin{eqnarray}}
\def\eeqn{\end{eqnarray}}
\def\tab{\mathbb{T}_{a,b}}
\def\vtab{\vec{\mathbb{T}}_{a,b}}
\def\n{\eta}
\def\vn{\vec{\eta}}
\def\vp{\vec{p}}
\def\z{\zeta}
\def\vz{\vec{\zeta}}
\def\zin{\zeta_{\infty}}
\def\vzin{\vec{\zeta}_{\infty}}
\def\vx{\vec{x}}
\def\vy{\vec{y}}
\def\vs{\vec{s}}
\def\vS{\vec{S}}
\def\vp{\vec{p}}
\title{Bootstrap Percolation on Periodic Trees}
\author{
Milan Bradonji\'c and Iraj Saniee\\
\\
Mathematics of Networks and Systems\\ 
Bell Labs, Alcatel-Lucent\\ 
600 Mountain Avenue, Murray Hill, NJ 07974, USA 
\\
\texttt{\{milan,iis\}@research.bell-labs.com}
}
\begin{document}
\maketitle

\begin{abstract}
We study bootstrap percolation with the threshold parameter $\theta \geq 2$ and the initial probability $p$ on infinite periodic trees that are defined as follows. Each node of a tree has degree selected from a finite predefined set of non-negative integers and starting from any node, all nodes at the same graph distance from it have the same degree. We show the existence of the critical threshold $p_f(\theta) \in (0,1)$ such that with high probability, (i) if $p > p_f(\theta)$ then the periodic tree becomes fully active, while (ii) if $p < p_f(\theta)$ then a periodic tree does not become fully active. We also derive a system of recurrence equations for the critical threshold $p_f(\theta)$ and compute these numerically for a collection of periodic trees and various values of $\theta$, thus extending previous results for regular (homogeneous) trees.
\end{abstract}

\section{Introduction}
\label{sec:intro}
Bootstrap percolation is a dynamic growth model generalizing cellular 
automata from square grids to arbitrary graphs. Starting from a random 
distribution of some contagious characteristic or feature over the nodes 
of a network (often infinite), new nodes iteratively may acquire the 
feature based on the density of nodes possessing it in their immediate
neighborhoods. The goal is to determine under what conditions the contagion or feature 
spreads over almost all the nodes. 
In particular, there may exist a probability $p_f$, the percolation threshold, 
which characterizes the initial distribution of the said feature, and  gives rise to almost full contagion. 
Clearly such a threshold will depend on the structure of the network and the local activation rule characterized by
parameter $\theta \geq 2$ which determines when an inactive node becomes 
active. 
Bootstrap percolation is therefore a useful model to study spread of viruses between
communities, diffusion of attacks on the web or 
growth of the so-called ``viral content'' in social networks.  It turns out that other 
than for regular trees, Euclidean lattices and some random graph models, there 
are no analytical results on the percolation threshold~\cite{chalupa-1979-bootstrap,balogh-2006-bootstrap,aizenman-1988-metastability,holroyd-2003-sharp,balogh-2012-sharp,balogh-2007-bootstrap,amini-2010-bootstrap,bradonjic-saniee-2013}.

In this work, we study both analytically and numerically bootstrap percolation on periodic trees.  
Periodic trees are useful for estimating upper bounds on the percolation thresholds of various types of semi-regular Euclidean and 
non-Euclidean lattices. Generally, trees play an important role in estimating or bounding percolation threshold for complex graphs. For example, the percolation threshold of a spanning tree of a graph is an upper bound on the percolation threshold of the graph. Thus, when the spanning tree is regular, as in Figure~\ref{fig:tree}, existing results can be used~\cite{chalupa-1979-bootstrap,fontes-2008-bootstrap}. 
Our goal is to extend those results further through derivation of exact thresholds for 
\emph{periodic trees} in which: (i) nodal degrees form a finite set of non-negative integers, 
and (ii) nodes at the same graph distance from any given node 
have the same degree, see Figure~\ref{fig:vt43} (left). We make 
these definitions more precise in Section~\ref{sec:bootstrap}. 
To this end, we derive explicit equations for the percolation threshold for periodic trees as function of
the degree sequence and $\theta$, the threshold parameter. To illustrate, we compute the percolation threshold for several periodic trees.  

Prior work on bootstrap percolation on trees includes the original paper of Chalupa~{\it et al}~\cite{chalupa-1979-bootstrap} which 
introduced bootstrap percolation (on regular trees) and obtained a fundamental recursion for computation of the critical threshold. More recently Balogh~{\it et al}~\cite{balogh-2006-bootstrap} obtained new results for non-regular (infinite) trees. Our approach leverages techniques introduced by Fontes and Schonmann~\cite{fontes-2008-bootstrap}, who derived percolation thresholds for the infinite cluster as well as almost sure activation of bootstrap percolation on regular trees.

\begin{figure}[ht]
\begin{minipage}[b]{0.495\linewidth}
\centering
\includegraphics[width = 2.5in]{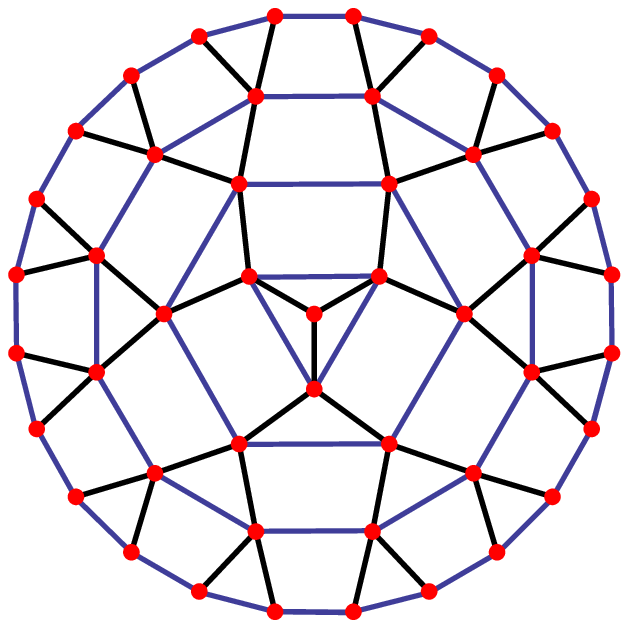}
\label{fig:figure1}
\end{minipage}
\hspace{0.15cm}
\begin{minipage}[b]{0.495\linewidth}
\centering
\includegraphics[width = 2.5in]{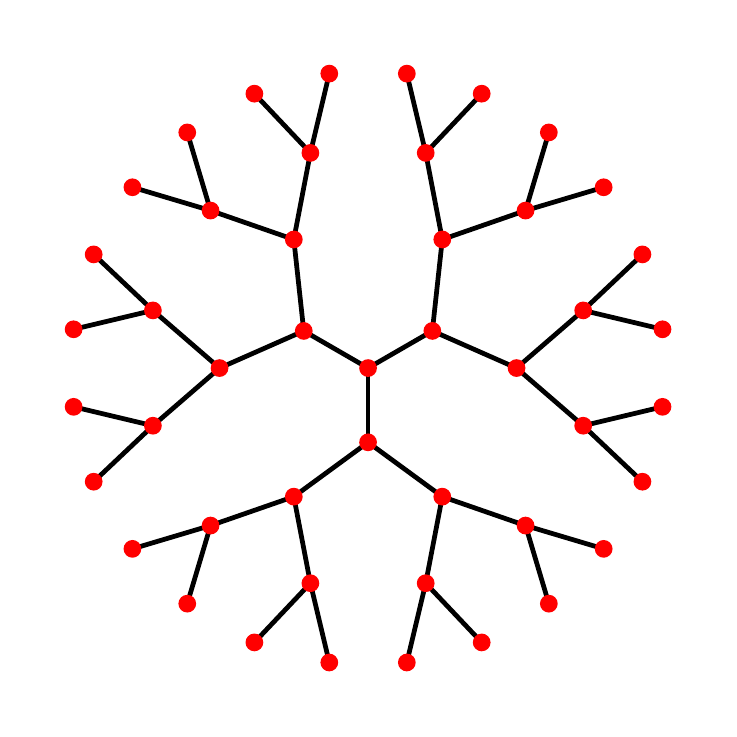}
\label{fig:figure2}
\end{minipage}
\caption{A regular spanning tree to approximate the percolation threshold of a graph.}
\label{fig:tree}
\end{figure}

\section{Bootstrap percolation process on periodic trees}
\label{sec:bootstrap}

Bootstrap percolation (BP) is a cellular automaton defined on an underlying graph $G=(V,E)$ with state space $\{0,1\}^{V}$ whose initial configuration is chosen by a Bernoulli product measure. In other words, every node is in one of two different states $0$ or $1$, {\it inactive} or {\it active} respectively, and a node is active with probability $p$, independently of other nodes,s within the initial configuration. 

After drawing an initial configuration at time $t=0$, a discrete time deterministic process updates the configuration according to a local rule:
an inactive node becomes active at time $t+1$ if the number of its active neighbors at $t$ (in the sense of graph distance) is greater than or equal to some specified {\it threshold parameter} $\theta$. Once an inactive node becomes active it remains active forever. A configuration that does not change at the next time step is a {\it stable} configuration.
A configuration is {\it fully active} if all its nodes of are active.

An interesting phenomenon to study is metastability near a first-order phase transition: 
Does there exist $0< p_c <1$ such that: 
\beq
\nonumber
\left(\forall p<p_c\right) \lim_{t \to \infty} \pr_p \left(V \textrm{ becomes fully active}\right) = 0\,,
\eeq
and 
\beq
\nonumber
\left(\forall p>p_c\right) \lim_{t \to \infty} \pr_p \left(V \textrm{ becomes fully active}\right) =1 \,?
\eeq

In this work we study bootstrap percolation processes and 
associated $p_c$'s on periodic trees defined as follows. 

\begin{definition}
\label{def:period.tree}(Periodic Tree)
Let $\ell, m_0,m_1,\dots,m_{\ell-1} \in \N$. An $\ell$-periodic tree $\mathbb{T}_{m_0,m_1,\dots,m_{\ell-1}}$ is recursively defined as follows.
Consider a node $\emptyset$, called root. The nodes at the distance $k \mod \ell$ from $\emptyset$ have degree $m_k+1$ for $k \in \N$.
\end{definition}
A regular (ordinary) tree $T_d$ is a 1-periodic tree 
where each node has degree $d+1$.

The schematic presentation of $T_{3,2}$ is given in Figure~\ref{fig:vt43}.
Notice that nodes in this tree have degrees 4 and 3; those at even 
distance from the node in the center have degree 4 and those at odd distance have 
degree 3.

\begin{figure}[!h]
\begin{centering}
\setlength{\unitlength}{0.505mm}
\begin{picture}(150,100)
\thicklines
\put(50,50){\line(1,0){20}}
\put(50,50){\line(0,1){20}}
\put(50,50){\line(-1,0){20}}
\put(50,50){\line(0,-1){20}}
\put(70,50){\line(1,1){15}}
\put(70,50){\line(1,-1){15}}
\put(30,50){\line(-1,1){15}}
\put(30,50){\line(-1,-1){15}}
\put(50,70){\line(1,1){15}}
\put(50,70){\line(-1,1){15}}
\put(50,30){\line(1,-1){15}}
\put(50,30){\line(-1,-1){15}}
\put(85,65){\line(1,-1){7}}
\put(85,65){\line(1,1){7}}
\put(85,65){\line(-1,1){7}}
\put(65,85){\line(-1,1){7}}
\put(65,85){\line(1,-1){7}}
\put(65,85){\line(1,1){7}}
\put(85,35){\line(-1,-1){7}}
\put(85,35){\line(1,-1){7}}
\put(85,35){\line(1,1){7}}
\put(35,85){\line(-1,-1){7}}
\put(35,85){\line(-1,1){7}}
\put(35,85){\line(1,1){7}}
\put(15,65){\line(-1,-1){7}}
\put(15,65){\line(-1,1){7}}
\put(15,65){\line(1,1){7}}
\put(15,35){\line(-1,-1){7}}
\put(15,35){\line(-1,1){7}}
\put(15,35){\line(1,-1){7}}
\put(35,15){\line(-1,-1){7}}
\put(35,15){\line(-1,1){7}}
\put(35,15){\line(1,-1){7}}
\put(65,15){\line(-1,-1){7}}
\put(65,15){\line(1,1){7}}
\put(65,15){\line(1,-1){7}}

\put(50,50){\circle*{2}}
\put(70,50){\circle{2}}
\put(30,50){\circle{2}}
\put(50,70){\circle{2}}
\put(50,30){\circle{2}}
\put(15,65){\circle*{2}}
\put(15,35){\circle*{2}}
\put(85,65){\circle*{2}}
\put(85,35){\circle*{2}}
\put(65,85){\circle*{2}}
\put(65,15){\circle*{2}}
\put(35,85){\circle*{2}}
\put(35,15){\circle*{2}}
\put(92,72){\circle{2}}
\put(92,28){\circle{2}}
\put(8,72){\circle{2}}
\put(8,28){\circle{2}}
\put(92,58){\circle{2}}
\put(92,42){\circle{2}}
\put(8,58){\circle{2}}
\put(8,42){\circle{2}}
\put(78,72){\circle{2}}
\put(78,28){\circle{2}}
\put(22,72){\circle{2}}
\put(22,28){\circle{2}}
\put(72,92){\circle{2}}
\put(58,92){\circle{2}}
\put(72,78){\circle{2}}
\put(72,22){\circle{2}}
\put(72,8){\circle{2}}
\put(58,8){\circle{2}}
\put(42,92){\circle{2}}
\put(28,92){\circle{2}}
\put(28,78){\circle{2}}
\put(42,8){\circle{2}}
\put(28,22){\circle{2}}
\put(28,8){\circle{2}}
\end{picture}
\setlength{\unitlength}{0.505mm}
\begin{picture}(150,100)
\thicklines
\put(70,50){\vector(-1,0){20}}
\put(50,50){\vector(0,1){20}}
\put(30,50){\vector(+1,0){20}}
\put(50,30){\vector(0,+1){20}}
\put(85,65){\vector(-1,-1){15}}
\put(85,35){\vector(-1,1){15}}
\put(15,65){\vector(1,-1){15}}
\put(15,35){\vector(+1,+1){15}}
\put(50,70){\vector(1,1){15}}
\put(35,85){\vector(+1,-1){15}}
\put(65,15){\vector(-1,+1){15}}
\put(35,15){\vector(+1,+1){15}}
\put(92,58){\vector(-1,+1){7}}
\put(92,72){\vector(-1,-1){7}}
\put(78,72){\vector(+1,-1){7}}
\put(58,92){\vector(+1,-1){7}}
\put(72,78){\vector(-1,+1){7}}
\put(65,85){\vector(1,1){7}}
\put(78,28){\vector(+1,+1){7}}
\put(92,28){\vector(-1,+1){7}}
\put(92,42){\vector(-1,-1){7}}
\put(28,78){\vector(+1,+1){7}}
\put(28,92){\vector(+1,-1){7}}
\put(42,92){\vector(-1,-1){7}}
\put(8,58){\vector(+1,+1){7}}
\put(8,72){\vector(+1,-1){7}}
\put(22,72){\vector(-1,-1){7}}
\put(8,28){\vector(+1,+1){7}}
\put(8,42){\vector(+1,-1){7}}
\put(22,28){\vector(-1,+1){7}}
\put(28,8){\vector(+1,+1){7}}
\put(28,22){\vector(+1,-1){7}}
\put(42,8){\vector(-1,+1){7}}
\put(58,8){\vector(+1,+1){7}}
\put(72,22){\vector(-1,-1){7}}
\put(72,8){\vector(-1,+1){7}}

\put(50,50){\circle*{2}}
\put(70,50){\circle{2}}
\put(30,50){\circle{2}}
\put(50,70){\circle{2}}
\put(50,30){\circle{2}}
\put(15,65){\circle*{2}}
\put(15,35){\circle*{2}}
\put(85,65){\circle*{2}}
\put(85,35){\circle*{2}}
\put(65,85){\circle*{2}}
\put(65,15){\circle*{2}}
\put(35,85){\circle*{2}}
\put(35,15){\circle*{2}}
\put(92,72){\circle{2}}
\put(92,28){\circle{2}}
\put(8,72){\circle{2}}
\put(8,28){\circle{2}}
\put(92,58){\circle{2}}
\put(92,42){\circle{2}}
\put(8,58){\circle{2}}
\put(8,42){\circle{2}}
\put(78,72){\circle{2}}
\put(78,28){\circle{2}}
\put(22,72){\circle{2}}
\put(22,28){\circle{2}}
\put(72,92){\circle{2}}
\put(58,92){\circle{2}}
\put(72,78){\circle{2}}
\put(72,22){\circle{2}}
\put(72,8){\circle{2}}
\put(58,8){\circle{2}}
\put(42,92){\circle{2}}
\put(28,92){\circle{2}}
\put(28,78){\circle{2}}
\put(42,8){\circle{2}}
\put(28,22){\circle{2}}
\put(28,8){\circle{2}}
\end{picture}
\end{centering}
\caption{Periodic tree $T_{3,2}$ (left), and its oriented version $\vec{T}_{3,2}$ (right).}
\label{fig:vt43}
\end{figure}
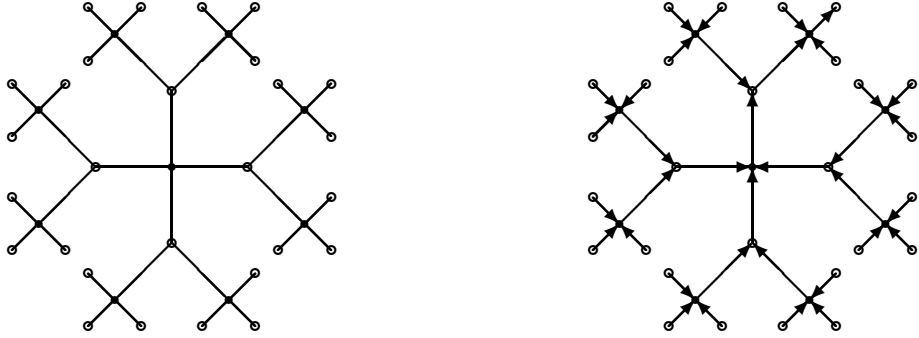

\begin{definition}
\label{def:orient.period.tree}(Oriented $\ell$-Periodic Tree)
Let $\ell, m_0,m_1,\dots,m_{\ell-1} \in \N$. An oriented $\ell$-periodic tree $\vec{\mathbb{T}}_{m_0,m_1,\dots,m_{\ell-1}}$ is recursively defined as follows. Consider a node $\emptyset$, called root. The nodes at the distance $k \mod \ell$ from $\emptyset$ have in-degree $m_k$ and out-degree $1$ for $k \in \N$.
\end{definition}
The schematic presentation of $T_{3,2}$ and its oriented version 
$\vec{T}_{3,2}$ are given in Figure~\ref{fig:vt43}. 
The following Lemma~\ref{lm:charact.pc} is an important ingredient for our main result given by Theorem~\ref{thm:mainresult}, which we prove directly. This result has appeared in different forms in~\cite{fontes-2008-bootstrap,biskup-2009-metastable}.

\begin{lemma}
\label{lm:charact.pc}
Given $n, \th \in \N$ such that $2 \leq \th \leq n-1$ and $x \in [0,1]$ let
\beq
\label{eq:phi}
\phi_{n,p,\th} (x):=p + (1-p)\sum_{k=\th}^n {n \choose k} x^k (1-x)^{n-k} \,.
\eeq
There exists $p_c \in (0,1)$ such that for any $p > p_c$ we have $\phi_{n,p,\th} (x) > x$ for every $x \in (0,1)$, and $1$ is the only solution of $\phi_{n,p,\th} (x) = x$ in $[0,1]$.
\end{lemma}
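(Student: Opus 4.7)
The plan is to study $\phi$ by isolating its random part: write $\phi_{n,p,\theta}(x) = p + (1-p)\psi(x)$ where $\psi(x) := \sum_{k=\theta}^n \binom{n}{k}x^k(1-x)^{n-k} = \pr(\bin(n,x) \geq \theta)$, and then reduce the fixed-point equation $\phi(x) = x$ to a scalar equation $p = h(x)$ for a $p$-free auxiliary function $h$. Showing that $\sup h \in (0, 1)$ will supply the desired $p_c$.

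First I would collect the shape of $\psi$ on $[0,1]$: clearly $\psi(0)=0$, $\psi(1)=1$, and $\psi$ is strictly increasing. The classical identity $\psi'(x) = n\binom{n-1}{\theta-1} x^{\theta-1}(1-x)^{n-\theta}$ combined with $2 \leq \theta \leq n-1$ gives $\psi'(0) = \psi'(1) = 0$, and a direct computation of $\psi''$ shows that $\psi$ is strictly convex on $[0, x^*]$ and strictly concave on $[x^*, 1]$, where $x^* = (\theta-1)/(n-1)$. A Taylor expansion in $1 - x$ near $x=1$ yields
\[
1 - \psi(x) = \binom{n}{\theta-1}(1-x)^{n-\theta+1}\bigl(1 + o(1)\bigr),
\]
which vanishes faster than $1-x$ because $n - \theta + 1 \geq 2$, forcing $\psi(x) > x$ just below $1$. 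Combined with the convex-then-concave shape and the fact that $\psi \not\equiv \mathrm{id}$ (since $\psi'(0)=0$), this shows $\psi(x) - x$ has exactly one interior zero $x_0 \in (0, 1)$, with $\psi(x) < x$ on $(0, x_0)$ and $\psi(x) > x$ on $(x_0, 1)$.

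Next I would introduce $h(x) := (x - \psi(x))/(1 - \psi(x))$, which is continuous on $[0, 1)$ because $\psi(x) < 1$ there, and satisfies the equivalence $\phi(x) = x \iff p = h(x)$. The previous paragraph gives $h(0) = h(x_0) = 0$, $h > 0$ on $(0, x_0)$, $h < 0$ on $(x_0, 1)$, and the asymptotic above yields $h(x) \sim -\bigl[\binom{n}{\theta-1}(1-x)^{n-\theta}\bigr]^{-1} \to -\infty$ as $x \to 1^{-}$. Consequently
\[
p_c := \sup_{x \in [0,1)} h(x) = \max_{x \in [0, x_0]} h(x)
\]
is attained on the compact interval $[0, x_0]$. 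Strict positivity of $h$ on $(0, x_0)$ gives $p_c > 0$, and since $h(x) = 1$ would force $x - \psi(x) = 1 - \psi(x)$, i.e., $x = 1$, we get $p_c < 1$.

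Finally, for any $p > p_c$ the equation $p = h(x)$ has no solution in $[0, 1)$, so $\phi$ has no fixed point there and $\phi(1) = 1$ remains the unique solution of $\phi(x) = x$ in $[0,1]$. Since $\phi(x) - x$ is continuous on $[0,1]$, strictly positive at $x = 0$ (it equals $p > 0$), and nowhere zero on $(0,1)$, the intermediate value theorem forces $\phi(x) > x$ throughout $[0, 1)$, in particular on $(0,1)$, as required. The main obstacle is the shape analysis in paragraph two — proving uniqueness of the interior fixed point of $\psi$ together with the precise order $(1-x)^{n-\theta+1}$ of $1 - \psi(x)$ near $x=1$ — since both the existence of $p_c$ and the two-sided bounds $0 < p_c < 1$ rest entirely on these facts.
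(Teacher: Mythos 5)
Your proposal is correct, but it takes a genuinely different route from the paper. You invert the fixed-point equation in $p$: writing $\phi_{n,p,\th}(x)=p+(1-p)\psi(x)$ with $\psi(x)=\Pr{\bin(n,x)\geq \th}$, you observe that for $x\in[0,1)$ the equation $\phi_{n,p,\th}(x)=x$ is equivalent to $p=h(x):=(x-\psi(x))/(1-\psi(x))$, and you take $p_c:=\sup_{x\in[0,1)}h(x)$, deducing $p_c>0$ from $\psi(x)<x$ near $0$ (via $\psi'(0)=0$) and $p_c<1$ from $h<1$ pointwise together with attainment of the supremum (guaranteed by $h\to-\infty$ as $x\to1^-$, which your expansion $1-\psi(x)\asymp\binom{n}{\th-1}(1-x)^{n-\th+1}$ delivers since $\th\leq n-1$). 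The paper instead works with $\Phi_p(x)=\phi_{n,p,\th}(x)-x$ directly: it shows via the derivative bound at $x^*=(\th-1)/(n-1)$ that $\Phi_p$ is strictly decreasing, hence positive on $(0,1)$, for all $p$ above some $p^*$; it exhibits an interior root of $\Phi_0$ (from $\Phi_0(0)=\Phi_0(1)=0$, $\Phi_0'(0)=\Phi_0'(1)=-1$); and it then uses strict monotonicity and continuity of $\Phi_p$ in $p$ to define $p_c$ as an infimum. Both arguments rest on the same shape facts about $\psi$ (vanishing derivative at both endpoints, single inflection at $x^*$), but yours buys an explicit variational formula $p_c=\sup_{x\in(0,1)}\frac{x-\psi(x)}{1-\psi(x)}$, while the paper's two-sided squeeze in $p$ is what leads naturally to its Remark characterizing $p_c$ by the tangency system $\Phi_p(x)=0$, $\Phi_p'(x)=0$. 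One small comment: the only step you leave as a sketch, uniqueness of the interior zero $x_0$ of $\psi(x)-x$, does follow from your stated facts (the derivative $\psi'(x)-1$ equals $-1$ at both endpoints and is unimodal with maximum at $x^*$, forcing a decrease--increase--decrease pattern for $\psi(x)-x$), but it is not actually needed for the lemma: positivity of $h$ somewhere, $h<1$ everywhere on $[0,1)$, and $h\to-\infty$ at $1^-$ already give $p_c\in(0,1)$ and the conclusion for $p>p_c$.
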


\begin{proof}
From~(\ref{eq:phi}) it is immediately clear that $x=1$ is a solution of $\phi_{n,p,\th}(x)=x$ in $[0,1]$.
Given $n$ and $\th$, let us define the function $\Phi_{p}(x):=\phi_{n,p,\th}(x) - x$. 
The proof follows from analyzing $\Phi_{p}(x)$ as a function
of $x$ and its continuity and monotonicity as a 
function of $p$. The first and second derivatives, as functions of $x$,
are given by:
\beqn
\label{eq:phiprim}
\nonumber
\Phi'_{p}(x) &=& (1-p)n{n-1 \choose \th-1}x^{\th-1}(1-x)^{n-\th}-1 \,, \\
\nonumber
\Phi''_{p}(x) &=& (1-p)n{n-1 \choose \th-1}x^{\th-2}(1-x)^{n-\th-1} \left( \th -1 - (n-1)x \right) \,.
\eeqn

By considering $\Phi''_{p}(x)$, we see that $x^*:=(\th-1)/(n-1)$ is a unique stationary point of the fist derivative $\Phi'_{p}(x)$ in the open interval $(0,1)$.
Therefore $\Phi'_{p}(x)$ is strictly increasing on $[0,x^*)$, strictly decreasing on $(x^*,1]$, and attains its maximum value at $x^*$ given by
\beq
\label{eq:phi.max.derivative}
\Phi'_p\left( x^* \right) = (1-p)n{n-1 \choose \th-1} \left( \frac{\th-1}{n-1} \right)^{\th-1} \left( \frac{n-\th}{n-1} \right)^{n-\th} - 1 \,.
\eeq

For a given $n$, it is evident from (\ref{eq:phi.max.derivative}) that there exists $p^* \in (0,1)$ such that $\Phi'_{p}(x^*) < 0$.
Hence, for every $p>p^*$, the first derivative is less than zero $\Phi'_p(x)<0$, and the function $\Phi_p(x)$ is strictly decreasing. We have $\Phi_p(0)=p$ and $\Phi_p(1)=0$. 
Therefore $\Phi_p(x)$ is strictly positive in $(0,1)$ and thus $\phi_{n,p,\th}(x)>x$ in $(0,1)$. 

This does not conclude the proof yet, as $p^*$ is not the  $p_c$ in the assertion of the lemma. 
To identify $p_c$, we analyze $\Phi_0(x)=\sum_{k=\th}^n {n \choose k} x^k (1-x)^{n-k}-x$. The idea is to show that $\Phi_0(x)=0$ has a real root in $(0,1)$.
Then the monotonicity and continuity of $\Phi_p(x)$ in $p$ (a linear function of $p$) will lead to the existence of the critical $p_c$ in $(0,1)$ for which $\Phi_{p_c}(x)=0$ has a unique solution in $(0,1)$.

By simple substitution, we have $\Phi_0(0)=\Phi_0(1)=0$ and $\Phi'_0(0)=\Phi'_0(1)=-1$, 
so there exists a root $r \in (0,1)$ such that $\Phi_0(r)=0$. 
We have already shown that for any $p>p^*$, $\Phi_p(x)>0$ for every $x$ in $(0,1)$. 
Observe that $\Phi_{p}(x)$ is strictly increasing and continuous in $p \in [0,1]$. 
Hence there exists $0<p_c<p^*$ such that the equation $\Phi_p(x)=0$ has real root(s) in $(0,1)$ for every $p \leq p_c$, which is given by
\beq
p_c = \inf \left\{ p \in (0,1): \textrm{$\phi_{n,p,\th}(x)>x$ for every $x$ in $(0,1)$} \right\} \,.
\eeq
This concludes the proof. 
\end{proof}

\begin{remark}
The critical value $p_c$ can be computed as the solution for $p$ of the system of two equations $\phi_{n,p,\th}(x)=x$ and $\phi'_{n,p,\th}(x)=1$ (respectively, $\Phi_p(x)=0$ and $\Phi'_p(x)=0$) in $p$ and $x$ in $(0,1)^2$. Concretely, this system is given by:
\beqn
\nonumber
p + (1-p)\sum_{k=\th}^n {n \choose k} x^k (1-x)^{n-k} =x \,,\\
\nonumber
(1-p)n{n-1 \choose \th-1}x^{\th-1}(1-x)^{n-\th}=1 \,.
\eeqn
\end{remark}

\begin{remark}
It is not hard to show, by analyzing $\Phi_p'(x)$ and $\Phi''_p(x)$, that for $p<p_c$ the equation $\phi_{n,p,\th}(x)=x$ has exactly two real solutions in $(0,1)$, see Figure~\ref{fig:curve03}, and no roots when $p>p_c$, see Figure~\ref{fig:curve04}. (The fact that $0.3 < p_c < 0.4$ may be found in Figure~\ref{fig:numerics}, top-left, in the fourth curve from the bottom which corresponds to $a=b=8$ and $\th=5$.)
\end{remark}
 
\begin{figure}[!h]  
\centering 
\includegraphics[width=0.95\columnwidth]{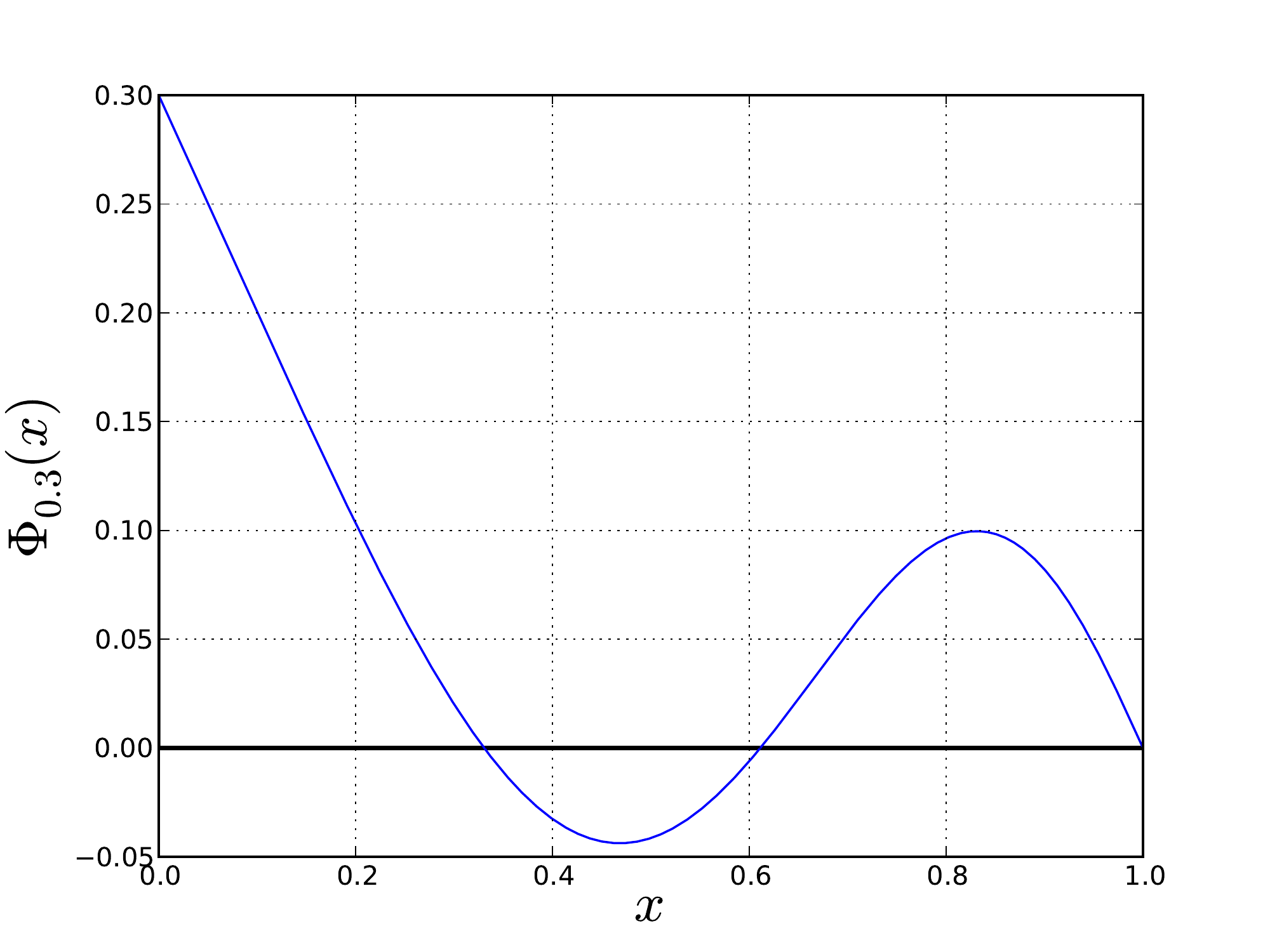}
\caption{$\Phi_{0.3}(x)$ for $n=7, \th=5, p=0.3$.}
\label{fig:curve03}
\end{figure}

\begin{figure}[!h]  
\centering 
\includegraphics[width=0.95\columnwidth]{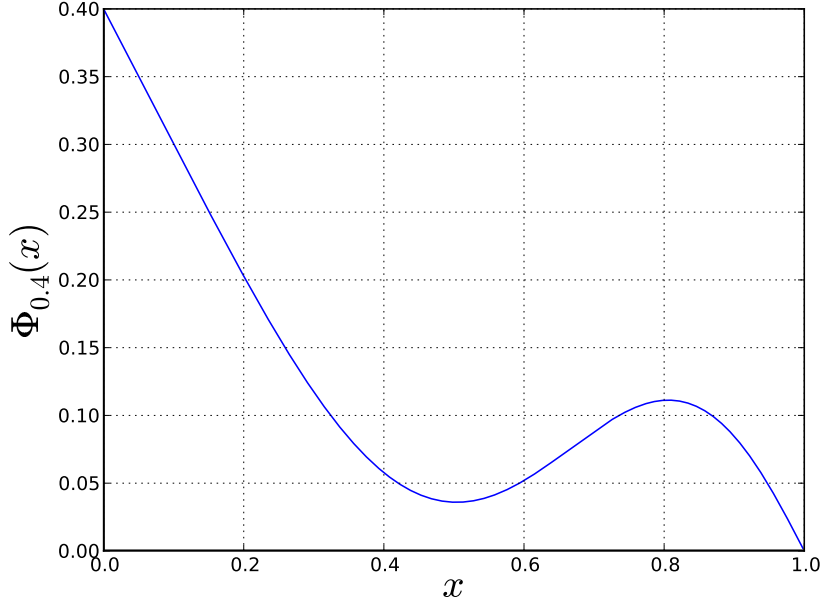}
\caption{$\Phi_{0.4}(x)$ for $n=7, \th=5, p=0.4$.}
\label{fig:curve04}
\end{figure}

\section{Main result}
\label{sec:bp_period_tree}

We give a proof for the case of a tree of periodicity two and then indicate
how the result may be proved for larger periodicity. 

\begin{theorem}
\label{thm:mainresult}
Given $a,b \in \N$ and $2 \leq \th < a,b$ consider a BP on $\tab$ with the initial probability $p$. There exists $p_f \in (0,1)$ such that for all $p \geq p_f$, the tree $\tab$ is fully active a.a.s., and $\tab$ is not fully active a.a.s. for $p<p_f$.
\end{theorem}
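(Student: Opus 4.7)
The strategy is to reduce the two-periodic recursion to a one-variable fixed-point problem for the composition $F_p := \phi_{a,p,\theta}\circ\phi_{b,p,\theta}$, apply the argument of Lemma~\ref{lm:charact.pc} to this composition to extract a critical $p_f$, and then conclude both directions via a subtree-BP decomposition combined with a $0/1$-law. For a non-root vertex $v$ at depth $k\pmod{2}$ of $\tab$, let $s_k$ be the probability that $v$ is eventually active in the BP restricted to the subtree $T_v$ rooted at $v$. Since $v$'s $m_k$ children occupy disjoint subtrees and evolve independently of $v$ until $v$ itself ever becomes active, the standard subtree argument yields
\begin{equation*}
s_0 = \phi_{a,p,\theta}(s_1), \qquad s_1 = \phi_{b,p,\theta}(s_0),
\end{equation*}
so that $s_0$ is a fixed point of $F_p$, namely the one obtained by iterating $F_p$ from the initial value $p$.

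Next I would rerun the analysis of Lemma~\ref{lm:charact.pc} with $F_p$ in place of $\phi_{n,p,\theta}$. The composition inherits the essential qualitative features: continuity and strict monotonicity in both $x$ and $p$; $F_p(1)=1$; and, because $\theta\geq 2$ and $\theta<a,b$, the chain rule gives $F_p'(0)=F_p'(1)=0$, so $G_p(x):=F_p(x)-x$ satisfies $G_p(1)=0$ and $G_p'(0)=G_p'(1)=-1$. For $p$ close to $1$ the curve $F_p$ is uniformly close to $1$ and $G_p>0$ on $(0,1)$, whereas at $p=0$ one computes $F_0(x)\sim c\,x^{\theta^2}$ near $0$ and $1-F_0(x)\sim c'(1-x)^{(a-\theta+1)(b-\theta+1)}$ near $1$, so $G_0$ is negative just above $0$ and positive just below $1$, forcing an interior fixed point. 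Monotonicity and continuity in $p$ then yield
\begin{equation*}
p_f := \inf\bigl\{p\in(0,1) : F_p(x)>x\text{ for all } x\in(0,1)\bigr\}\in(0,1),
\end{equation*}
with $F_p$ having $1$ as its only fixed point for $p>p_f$ and admitting an additional fixed point in $(0,1)$ for $p<p_f$. The main obstacle I anticipate is this step: the derivative of the composition is more intricate than that of the single $\phi_{n,p,\theta}$, and one must also verify that iterating $F_p$ from $p$ genuinely lands at the smaller interior fixed point whenever $p<p_f$, which follows from the elementary inequality $F_p(p)>p$ placing $p$ strictly below the smallest fixed point of $F_p$.

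With $p_f$ in hand the theorem follows. For $p>p_f$ we have $s_0=s_1=1$, so every non-root vertex is a.s.\ active in its subtree BP and therefore in the full BP; countable additivity over the countably many vertices yields $\tab$ fully active a.a.s. For $p<p_f$ we have $s_1<1$, and I apply the ``remove-$\emptyset$'' decomposition: $\tab\setminus\{\emptyset\}$ is a disjoint union of the $a+1$ subtrees $T_{c_1},\ldots,T_{c_{a+1}}$ rooted at the children of $\emptyset$, and the usual monotone coupling (pinning $\emptyset$ inactive) shows that, conditional on $\emptyset$ being initially inactive, $\emptyset$ activates in $\tab$ iff in the BP on $\tab\setminus\{\emptyset\}$ at least $\theta$ of its children eventually activate. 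The children activate independently each with probability $s_1$, so
\begin{equation*}
\Pr{\emptyset \text{ eventually active}} = \phi_{a+1,p,\theta}(s_1)<1,
\end{equation*}
whence $\Pr{\tab\text{ fully active}}<1$. Since ``$\tab$ fully active'' is invariant under the automorphism group of $\tab$ acting on the i.i.d.\ Bernoulli$(p)$ initial measure, a standard ergodicity argument forces $\Pr{\tab\text{ fully active}}\in\{0,1\}$, hence $=0$.
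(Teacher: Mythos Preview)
Your overall architecture matches the paper's: reduce to the oriented/subtree process, obtain the fixed-point system $s_0=\phi_{a,p,\theta}(s_1)$, $s_1=\phi_{b,p,\theta}(s_0)$, locate a critical $p_f\in(0,1)$, and then transfer back to $\tab$ via the ``remove-$\emptyset$'' decomposition (this is exactly the paper's Theorem~\ref{thm:bp.orient} together with Theorem~\ref{prop2} and Proposition~\ref{proposition:pf}).

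The one genuine methodological difference is how $p_f\in(0,1)$ is obtained. You propose to analyze the composition $F_p=\phi_{a,p,\theta}\circ\phi_{b,p,\theta}$ directly, reproving a Lemma~\ref{lm:charact.pc}-type statement for $F_p$; you correctly flag that the derivative structure of $F_p$ is more delicate. The paper sidesteps this entirely with a sandwich: assuming $a\le b$, it introduces the auxiliary one-type recursions $\vec s_t=\phi_{a,p,\theta}(\vec s_{t-1})$ and $\vec S_t=\phi_{b,p,\theta}(\vec S_{t-1})$, uses the binomial stochastic dominance $\phi_{a,p,\theta}\le\phi_{b,p,\theta}$ to trap $\vec x_t,\vec y_t$ between $\vec s_t$ and $\vec S_t$, and then applies Lemma~\ref{lm:charact.pc} \emph{separately} to each single-$\phi$ recursion to get $p_b\le p_f\le p_a$ with $p_a,p_b\in(0,1)$. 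This is shorter and avoids any new calculus on $F_p$; your route works too but buys nothing extra here. As a small addendum, you invoke an automorphism $0$--$1$ law to upgrade $\Pr{\text{fully active}}<1$ to $=0$ for $p<p_f$; the paper does not carry out this step and is content with the weaker conclusion $x_\infty,y_\infty<1$.
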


To prove Theorem~\ref{thm:mainresult}, we adopt the methodology of~\cite{fontes-2008-bootstrap}. That is, we first derive the percolation
threshold, $\vp_f$, for the oriented periodic tree, $\vtab$ (see Definition~\ref{def:orient.period.tree}), using a system of 
recurrence equations (Theorem~\ref{thm:bp.orient}).  Next, we show that the percolation threshold, $p_f$
for the unoriented periodic tree $\tab$ has to be the same as the threshold for $\vtab$, that is $p_f=\vp_f$ (Theorem~\ref{prop2}). These two theorems complete the proof. 

\subsection{BP on an oriented tree $\vtab$}
\label{sec:bp:orient}

\begin{theorem}
\label{thm:bp.orient}
Given $a,b \in \N$ and $2 \leq \th < a,b$ consider a BP on $\tab$ with the initial probability $p$. There exists $\vp_f \in (0,1)$ such that for all $p > \vp_f$, the tree $\vtab$ is fully active a.a.s., and $\vtab$ is not fully active a.a.s. for $p < \vp_f$.
\end{theorem}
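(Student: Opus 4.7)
The plan is to reduce BP on $\vtab$ to a scalar fixed-point equation and then mimic the structure of Lemma~\ref{lm:charact.pc}. Let $x$ (resp.\ $y$) be the probability that the root of an oriented subtree whose root is at periodic level $0$ with in-degree $a$ (resp.\ level $1$ with in-degree $b$) eventually becomes active. Because the tree is oriented, the eventual state of any node depends only on the i.i.d.\ initial states in its in-subtree, and the in-subtrees of distinct in-neighbors are pairwise disjoint. Since a node activates iff it is initially active (probability $p$) or at least $\th$ of its in-neighbors eventually activate, I obtain
\beq
\nonumber
x = \phi_{a,p,\th}(y), \qquad y = \phi_{b,p,\th}(x),
\eeq
with $\phi$ from~\eqref{eq:phi}. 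Composing yields the scalar fixed-point equation $x = \Psi_p(x)$ with $\Psi_p := \phi_{a,p,\th}\circ\phi_{b,p,\th}$. Monotone convergence of BP on finite truncations of the tree identifies the actual root-activation probability with the smallest fixed point of $\Psi_p$ in $[p,1]$, so full activation a.a.s.\ is equivalent to $1$ being the unique fixed point of $\Psi_p$ in $[0,1]$.

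I would then repeat the monotonicity/continuity analysis of Lemma~\ref{lm:charact.pc} on $\Psi_p$. Note $\Psi_p(1)=1$ always, and $\Psi_p$ is jointly continuous and increasing in each of $x$ and $p$. By the chain rule together with $\phi_{n,p,\th}'(x) = (1-p)\,n\,\binom{n-1}{\th-1}x^{\th-1}(1-x)^{n-\th}$ (already computed in the proof of Lemma~\ref{lm:charact.pc}),
\beq
\nonumber
\Psi_p'(x) = (1-p)^2\, ab\, \binom{a-1}{\th-1}\binom{b-1}{\th-1}\, y^{\th-1}(1-y)^{a-\th}\, x^{\th-1}(1-x)^{b-\th},
\eeq
evaluated at $y=\phi_{b,p,\th}(x)$. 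This vanishes uniformly as $p\to 1^{-}$, so for $p$ near $1$, $\Psi_p(x)-x$ is strictly decreasing and has unique zero at $x=1$. Conversely at $p=0$ the hypotheses $\th\geq 2$ and $\th<a,b$ force $\Psi_0'(0)=\Psi_0'(1)=0$, while $\Psi_0(0)=0$ and $\Psi_0(1)=1$; hence $\Psi_0(x)-x$ has slope $-1$ at both endpoints and vanishes at some interior $r\in(0,1)$ by the intermediate value theorem.

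Setting
\beq
\nonumber
\vp_f := \inf\left\{ p\in(0,1) : \Psi_p(x)>x \textrm{ for all } x\in(0,1)\right\},
\eeq
the continuity and monotonicity of $\Psi_p$ in $p$ give $\vp_f\in(0,1)$. For $p>\vp_f$ the iteration $x_{k+1}=\Psi_p(x_k)$ starting from $x_0=p$ is strictly increasing (since $\Psi_p(x)>x$ on $[p,1)$) and bounded, hence converges to $1$; the root activates almost surely, and because every node of a given periodic type has the same marginal activation probability, a union bound over the countably many nodes yields full activation a.a.s. For $p<\vp_f$, $\Psi_p$ admits a fixed point $x^*\in[p,1)$, so the root's activation probability is at most $x^*<1$; because the $N_n\to\infty$ nodes at depth $n$ in $\vtab$ have pairwise disjoint in-subtrees and hence independent eventual states, $\pr(\vtab\textrm{ is fully active})\leq (x^*)^{N_n}\to 0$.

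The main obstacle I anticipate is pinning down a \emph{single} critical $\vp_f$ cleanly separating the two regimes: unlike the single-$\phi$ picture, the composition $\Psi_p$ could a priori exhibit multiple interior fixed points merging or splitting as $p$ varies. I would handle this by viewing the system as the coordinate-wise increasing map $(x,y)\mapsto(\phi_{a,p,\th}(y),\phi_{b,p,\th}(x))$ on the complete lattice $[0,1]^2$; Tarski's fixed-point theorem provides a least fixed point that is pointwise monotone nondecreasing in $p$, so $\{p:\textrm{least fixed point}=(1,1)\}$ is an upper set, producing a single threshold $\vp_f$. Extension from $\ell=2$ to general $\ell$ then follows by iterating the same construction with $\Psi_p = \phi_{m_0,p,\th}\circ\cdots\circ\phi_{m_{\ell-1},p,\th}$.
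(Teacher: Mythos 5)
Your proposal is correct, and its core coincides with the paper's argument: you derive the same recurrences $\vx_t=\phi_{a,p,\th}(\vy_{t-1})$, $\vy_t=\phi_{b,p,\th}(\vx_{t-1})$ from the disjointness of in-subtrees, identify the eventual activation probability with the limit of the monotone iteration started at $p$ (equivalently the least fixed point), and extract a single threshold from monotonicity in $p$. Where you genuinely diverge is in proving nontriviality, $\vp_f\in(0,1)$: the paper sandwiches $\vx_t,\vy_t$ between the two homogeneous iterations $\vs_t=\phi_{\min(a,b),p,\th}(\vs_{t-1})$ and $\vS_t=\phi_{\max(a,b),p,\th}(\vS_{t-1})$ and invokes Lemma~\ref{lm:charact.pc} twice, which immediately gives the quantitative bounds $p_b\leq\vp_f\leq p_a$; you instead rerun the endpoint analysis of Lemma~\ref{lm:charact.pc} directly on the composition $\Psi_p=\phi_{a,p,\th}\circ\phi_{b,p,\th}$ (your chain-rule formula for $\Psi_p'$ and the boundary behavior at $p\to 1^-$ and $p=0$ are correct under $2\leq\th<a,b$). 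Your route is self-contained for the composed map, extends to period $\ell$ by composing $\ell$ maps, and your explicit bridge from ``marginal activation probability equals (respectively, is less than) $1$'' to the a.a.s.\ statements --- a union bound over the countably many nodes, respectively independence of the depth-$n$ nodes with disjoint in-subtrees --- supplies bookkeeping the paper leaves implicit. Two small points to tighten: for $p<\vp_f$, say explicitly that the interior fixed point $x^*<1$ traps the iteration because $x_0=p\leq x^*$ and $\Psi_p$ is increasing; and note that $\vp_f>0$ follows from the strict dip of $\Psi_0$ below the diagonal near $x=0$ together with continuity of $p\mapsto\Psi_p(x)$ at a fixed witness point. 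With the upper-set property of $\left\{p:\Psi_p(x)>x \textrm{ for all } x\in(0,1)\right\}$ already in hand, the appeal to Tarski's theorem is not needed.
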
 

\begin{proof}
The dynamics of bootstrap percolation process on $\vtab$ is captured by knowing the states of nodes, that is, $\vz_t(v) \in \{0,1\}$ and $\vn_t(u) \in \{0,1\}$ for every $v \in V_a$ and $u \in V_b$ at $t \in \N_0$. 
Denote by $V_a$ and $V_b$ the two sets of nodes of degrees $a+1$ and $b+1$ respectively in $\tab$, that is, the sets of nodes of in-degrees $a$ and $b$ in $\vtab$.

Choose any node $v \in V_a$. Conditioning upon whether this node $v$ was active at time $0$ or not (i.e., $\vn_0 (v)=0$ or $\vn_0 (v)=1$), the probability that the node $v$ is active at time $t$ is given by
\begin{align*}
\pr \left( \vn_{t}(v)=1 \right) &= \pr \left( \vn_{0}(v)=1 \right) + \pr \left( \vn_{0}(v)=0 \right) 
\pr \left( \sum_{u \leadsto v} \vz_t(u) \geq \th : \vn_{0}(v)=0 \right) \,,
\end{align*}
where the symbol ``$\leadsto$'' indicates that $u$ is a neighbor of $v$ in the oriented tree $\vtab$ and the edge orientation is from $u$ to $v$.

Also, choose any node $u \in V_b$, independently of $v$. Analogously, the probability that node $u$ is active at time $t$ is given by
\begin{align*}
\pr \left( \vz_{t}(u)=1 \right) &= \pr \left( \vz_{0}(u)=1 \right) + \pr \left( \vz_{0}(u)=0 \right) 
\pr \left( \sum_{v \leadsto u} \vn_t(v) \geq \th : \vz_{0}(u)=0 \right) \,.
\end{align*}
Given symmetry and dynamical rules of the BP process, $\vz_{t}(x)$ are independent Bernoulli random variables and moreover independent of $\vn_0(v)$. 
Hence letting $\vx_t:=\pr\left(\vn_{t}(v)=1 \right)$ and $\vy_t:=\pr\left(\vz_{t}(u)=1 \right)$, we obtain
\beqn
\label{eq:vx}
\vx_{t} &=& p + (1-p) \sum_{k=\th}^{a} {a \choose k} \vy_{t-1}^k \left(1-\vy_{t-1} \right)^{a-k} \,, \\
\label{eq:vy}
\vy_{t} &=& p + (1-p) \sum_{k=\th}^{b} {b \choose k} \vx_{t-1}^k \left(1-\vx_{t-1} \right)^{b-k} \,,
\eeqn
where $\vx_0=p$ and $\vy_0=p$.

In order to simplify the notation, we define an auxiliary function 
\beq
\phi_{n,p,\th} (x):=p + (1-p)\sum_{k=\th}^n {n \choose k} x^k (1-x)^{n-k}\,,
\eeq
for $x \in [0,1]$, where $n, \th \in \N$ are given, such that $2 \leq \th \leq n-1$. The function $\phi_{n,p,\th}(x)$ is strictly increasing in $x$ (the first derivative in $x$ is positive in $(0,1)$). Moreover, given $x \in [0,1]$, the mapping $p \to \phi_{n,p,\th}(x)$ is strictly increasing in $p$ in $(0,1)$, (the first derivative in $p$ is positive in $(0,1)$).

From the definition of $\phi_{a,p,\th}$ and $\phi_{b,p,\th}$, the recurrence equations~(\ref{eq:vx}) and~(\ref{eq:vy}) can be rewritten in a more compact form
\begin{align}
\label{eq:req.sys.x}
\vx_{t} &= \phi_{a,p,\th}(\vy_{t-1}) \,,\\
\label{eq:req.sys.y}
\vy_{t} &= \phi_{b,p,\th}(\vx_{t-1}) \,.
\end{align}

We now show that the limits $\vx_\infty:=\lim_{t \to \infty} \vx_t$ and $\vy_\infty:=\lim_{t \to \infty} \vy_t$ exist. First, we show that the sequences $\{\vx_{t}\}_{t=0}^{\infty}$ and $\{\vy_{t}\}_{t=0}^{\infty}$ are increasing in $t$. By definition $\vx_{0}=p$ and $\vy_{0}=p$.
The monotonicity of $\phi_{a,p,\th}$ and $\phi_{a,p,\th}$, and~(\ref{eq:req.sys.x}) and~(\ref{eq:req.sys.y}) yield $\vx_{1} = \phi_{a,p,\th}(\vy_{0}) \geq \vy_0=p$ and similarly $\vy_{1} = \phi_{b,p,\th}(\vx_{0}) \geq \vx_0=p$. Hence $\vx_1 \geq \vx_0$ and $\vy_1 \geq \vy_0$. Assume that $\vx_{t}  \geq \vx_{t-1}$ and $\vy_{t}  \geq \vy_{t-1}$ for some $t \geq 1$.
Then it follows $\vx_{t+1} = \phi_{a,p,\th}(\vy_t) \geq \phi_{a,p,\th}(\vy_{t-1}) = \vx_{t}$ and similarly $\vy_{t+1} = \phi_{b,p,\th}(\vx_t) \geq \phi_{b,p,\th}(\vx_{t-1}) = \vy_{t}$. Hence by mathematical induction the sequences $\{\vx_{t}\}_{t=0}^{\infty}$ and $\{\vy_{t}\}_{t=0}^{\infty}$ are increasing, and upper bounded by $1$. By the monotone convergence theorem the (unique) limits $\vx_{\infty}$ and $\vy_{\infty}$ exist in $[0,1]$, and from~(\ref{eq:req.sys.x}) and~(\ref{eq:req.sys.y}), satisfy
\begin{align}
\label{eq:req.sys.x.lim}
\vx_{\infty} &= \phi_{a,p,\th}(\vy_{\infty}) \,,\\
\label{eq:req.sys.y.lim}
\vy_{\infty} &= \phi_{b,p,\th}(\vx_{\infty}) \,.
\end{align}
Concretely, 
\begin{align}
\label{eq:req.sys1.lim}
\vx_{\infty} &= \phi_{a,p,\th}\left(\phi_{b,p,\th}(\vx_{\infty})\right)\,, \\
\label{eq:req.sys2.lim}
\vy_{\infty} &= \phi_{b,p,\th}\left(\phi_{a,p,\th}(\vy_{\infty})\right)\,.
\end{align}
We also note that $\vx_\infty$ and $\vy_\infty$ are non-decreasing in $p \in [0,1]$. This follows from the fact that $\vx_t$ and $\vy_t$ are non-decreasing in $p$ for every $t \geq 0$. 

We now show that there exists $\vp_f \in (0,1)$ such that $\vx_{\infty}<1$ and $\vy_{\infty}<1$ for all $p < \vp_f$, and $\vx_{\infty}=1$ and $\vy_{\infty}=1$ for all $p > \vp_f$. Let us first consider $\vx_\infty$ and $\vy_\infty$ as $p$ varies in $[0,1]$.
From~(\ref{eq:req.sys.x.lim}) and~(\ref{eq:req.sys.y.lim}), $\vx_{\infty}=1$ is equivalent to $\vy_\infty=1$ (as well as $\vx_{\infty}<1$ is equivalent to $\vy_\infty<1$). Trivially, when $p=0$, the initial probabilities $\vx_{0}=\vy_{0}=0$, yielding $\vx_{t}=\vy_{t}=0$ for every $t \geq 0$. Similarly, $\vx_{t}=\vy_{t}=1$ for every $t \geq 0$, when $p=1$. Thus there exists a value $\vp_f$ in $[0,1]$ such that $\vx_\infty$ and $\vy_\infty$ are less than $1$ for every $p < \vp_f$, and equal to $1$ for every $p > \vp_f$. 

We still have to show that the critical value $\vp_f$ is indeed in $(0,1)$. W.l.o.g. let us assume $a= \min(a,b)$ and $b=\max(a,b)$. Consider the sequences $\{\vs_{t}\}_{t=0}^\infty$ and $\{\vS_{t}\}_{t=0}^\infty$ defined by 
\beqn
\label{eq:req.sS}
\nonumber
\vs_{t} &=& \phi_{a,p,\th}(\vs_{t-1}) \,, \\
\nonumber
\vS_{t} &=& \phi_{b,p,\th}(\vS_{t-1}) \,,
\eeqn
for $t \geq 1$, where $\vs_{0}=p$, $\vS_{0}=p$. 
From the stochastic dominance on the Binomial random variable 
\beq
\pr\left(\bin\left(a,p\right) \geq \th \right) \leq \pr\left(\bin\left(b,p\right) \geq \th \right) \,.
\eeq

Now, it easily follows by mathematical induction that the sequences $\{\vs_t\}_{t=0}^\infty$ and $\{\vS_t\}_{t=0}^\infty$ represent, respectively, a lower and an upper bound on both $\{\vx_t\}_{t=0}^\infty$ and $\{\vy_t\}_{t=0}^\infty$, that is, $\vs_t \leq \vx_t, \vy_t \leq \vS_t$, for every $t \geq 0$. Analogously to the proof of the existence of $\vx_\infty$ and $\vy_\infty$, one can show that the limits $\vs_\infty:=\lim_{t \to \infty} \vs_t$ and $\vS_\infty:=\lim_{t \to \infty} \vS_t$ exist, and satisfy 
\beq
\label{eq:limits.sandwich}
\vs_\infty \leq \vx_\infty, \vy_\infty \leq \vS_\infty \,,
\eeq
and moreover
\beqn
\nonumber
\vs_{\infty} &=& \phi_{a,p,\th}(\vs_{\infty}) \,, \\
\nonumber
\vS_{\infty} &=& \phi_{b,p,\th}(\vS_{\infty}) \,.
\eeqn

By Lemma~\ref{lm:charact.pc}, given $a$, there exists the critical value $p_{a} \in (0,1)$ such that $\vs_\infty<1$ for every $p < p_a$, and $\vs_\infty=1$ for every $p > p_a$. Similarly, given $b$, there exists the critical value $p_b \in (0,1)$ such that $\vS_\infty<1$ for every $p < p_b$, and $\vS_\infty=1$ for every $p > p_b$. From the fact that $\vx_\infty,\vy_\infty, \vs_\infty,\vS_\infty$ are non-decreasing in $p$ and their relation given in~(\ref{eq:limits.sandwich}), it follows that the critical value $\vp_f$ satisfies $p_b \leq \vp_f \leq p_a$ and indeed belongs to $(0,1)$.
\end{proof}

\begin{remark}
One can prove Theorem~\ref{thm:bp.orient} for an oriented tree with periodicity bigger than two. The steps of the proof are analogous to those presented above, but instead of two sequences $\{\vx_t\}_{t=0}^{\infty}$ and $\{\vy_t\}_{t=0}^{\infty}$ we have $\ell$ sequences, where $\ell$ is equal to the periodicity of the tree.
\end{remark}

\subsection{BP on an unoriented tree $\tab$}
\label{sec:bp:unorient}

To determine the critical threshold for BP on $\tab$, we use the result of Section~\ref{sec:bp:orient} on oriented trees. 
The dynamics of bootstrap percolation process on $\tab$ is captured by knowing the states of nodes in a graph, that is, $\z_t(v) \in \{0,1\}$ and $\n_t(u) \in \{0,1\}$ for every $v \in V_a$ and $u \in V_b$ at $t \in \N_0$. Denote by $x_t$ the probability that a node of degree $a+1$ is active at time $t$, and similarly by $y_t$ the probability that a node of degree $b+1$ is active at time $t$, where $x_0=p$ and $y_0=p$.

\begin{theorem}
\label{prop2} 
The probabilities $x_{\infty}, \vx_{\infty}, y_{\infty}, \vy_{\infty}$ satisfy 
\beq
\label{eq:propx}
x_{\infty}= p + (1-p) \sum_{k=\th}^{a+1} {a+1 \choose k} \vy_{\infty}^k(1-\vy_{\infty})^{b+1-k} \,,
\eeq
and
\beq
\label{eq:propy}
y_{\infty}= p + (1-p) \sum_{k=\th}^{b+1} {b+1 \choose k} \vx_{\infty}^k(1-\vx_{\infty})^{b+1-k} \,.
\eeq
\end{theorem}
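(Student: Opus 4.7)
The plan is to decompose the eventual activation of a $V_a$-node $v$ in the unoriented tree $\tab$ into $a+1$ independent contributions, one per neighbor of $v$, and to identify each contribution's probability with the oriented-tree constant $\vy_\infty$ supplied by \thmref{bp.orient}.

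First I would fix $v \in V_a$ with neighbors $u_1, \ldots, u_{a+1} \in V_b$, and for each $i$ let $T_i$ denote the component of $u_i$ in $\tab$ after deleting the edge $\{v,u_i\}$. Each $T_i$ is a rooted tree in which $u_i$ has $b$ children, each of its children has $a$ grandchildren, and so on; as a rooted tree this is precisely $\vtab$ rooted at a $V_b$-node. Let $E_i$ denote the event that $u_i$ is eventually active in the \emph{oriented} BP on $T_i$ (the rule being that a node activates once at least $\th$ of its children are active). By \thmref{bp.orient} we have $\pr(E_i) = \vy_\infty$, and since the $T_i$'s are pairwise vertex-disjoint and do not contain $v$, the events $E_1, \ldots, E_{a+1}$ together with $\{\n_0(v) = 1\}$ are jointly independent.

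The crux is the set-theoretic identity that $v$ is eventually active in $\tab$ if and only if $\n_0(v) = 1$ or at least $\th$ of the $E_i$'s occur. The ``if'' direction is immediate by monotonicity of BP under strengthening the update rule and enlarging the graph. For the ``only if'' direction I would prove by strong induction on the activation time $\tau$ the following sublemma: for every rooted subtree $T_w$ of the form above, if $w$ activates in unoriented BP on $T_w$ by time $\tau$ then $w$ activates in oriented BP on $T_w$ as well. The inductive step uses that, throughout $[0,\tau-1]$, $w$ is inactive, so a child $c$'s activation history on $T_w$ up to time $\tau-1$ cannot make use of $w$'s state and hence coincides with its unoriented history on the smaller rooted subtree $T_c$, to which the inductive hypothesis applies with a strictly smaller activation time. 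Applying this sublemma inside $\tab$: if $v$ activates at time $\tau \geq 1$, then $v$ is inactive on $[0,\tau-1]$, so each of the $\geq \th$ neighbors $u_i$ active at time $\tau-1$ must have activated in the unoriented BP on $T_i$, and the sublemma upgrades this to $E_i$.

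Taking expectations and using independence then yields
\begin{align*}
x_\infty \;=\; p + (1-p)\sum_{k=\th}^{a+1}\binom{a+1}{k}\vy_\infty^{k}(1-\vy_\infty)^{a+1-k},
\end{align*}
and the derivation of~\eqref{eq:propy} is identical after swapping the roles of $(a,V_a)$ and $(b,V_b)$. I expect the sublemma to be the main obstacle: one must combine the tree property (so that deleting $v$ disconnects $\tab$ into the independent pieces $T_i$) with the temporal and monotone nature of BP (so that the process up to the moment $v$ first activates is insensitive to $v$'s state) in order to equate an unoriented activation event with an oriented one.
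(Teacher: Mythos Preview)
Your approach is correct and structurally parallel to the paper's: both decompose the neighborhood of the chosen node into the disjoint subtrees $T_i$, reduce its eventual activation to the event that at least $\th$ of the roots $u_i$ eventually activate in BP restricted to $T_i$, and then identify that probability with the oriented constant $\vy_\infty$ (resp.\ $\vx_\infty$). The difference is in how the key equivalence ``unoriented activation of $u_i$ on $T_i$ $\Leftrightarrow$ oriented activation of $u_i$ on $T_i$'' is established. The paper argues the nontrivial direction by contrapositive: if $u_i$ is eventually inactive in the oriented process on $\vec T_i$, one extracts an infinite \emph{eventually-inactive} subtree in which every node has at most $\th-1$ eventually-active neighbors, and this subtree is a permanent obstruction in the unoriented process as well. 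Your sublemma instead runs forward by strong induction on the unoriented activation time $\tau$, using that the root is inactive on $[0,\tau-1]$ to peel it off and reduce to the children's subtrees with strictly smaller activation times. Both arguments are short and standard; yours is slightly more elementary, while the paper's blocking-structure argument is the one that generalizes more directly to settings without a finite time parameter. Incidentally, your displayed formula carries the correct exponent $(1-\vy_\infty)^{a+1-k}$; the $b+1-k$ appearing in~\eqref{eq:propx} is a typo in the statement.
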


\begin{proof}
The proof consists of two parts. In Part 1, we derive the equation for the probability that a randomly selected node (w.l.o.g. of degree $b+1$) in the unoriented BP becomes active as a function of the probabilities of activation of root nodes in truncated unoriented subtrees. 
In Part 2, we relate the probability of activation of the root node in truncated unoriented subtrees to that of the truncated oriented subtrees. Combining Parts 1 and 2 establishes~(\ref{eq:propx}) and~(\ref{eq:propy}).

\textbf{Part 1.}
As previously, choose uniformly at random a node $v_0 \in \tab$ of degree $b+1$. Denote by $v_1, v_2, \dots, v_{b+1}$ the neighbors of $v_0$.
Let $T_i=\tab - (v_0,v_i)$ be a tree incident to $v_i$ obtained by removing the edge $(v_0,v_i)$ from $\tab$, see Figure~\ref{fig:treeproof}. 
In $T_i$, node $v_i$ has degree $a$, while all other nodes have degree either $b+1$ or $a+1$. 

\begin{figure}[!h]  
\centering 
\includegraphics[width=0.75\columnwidth]{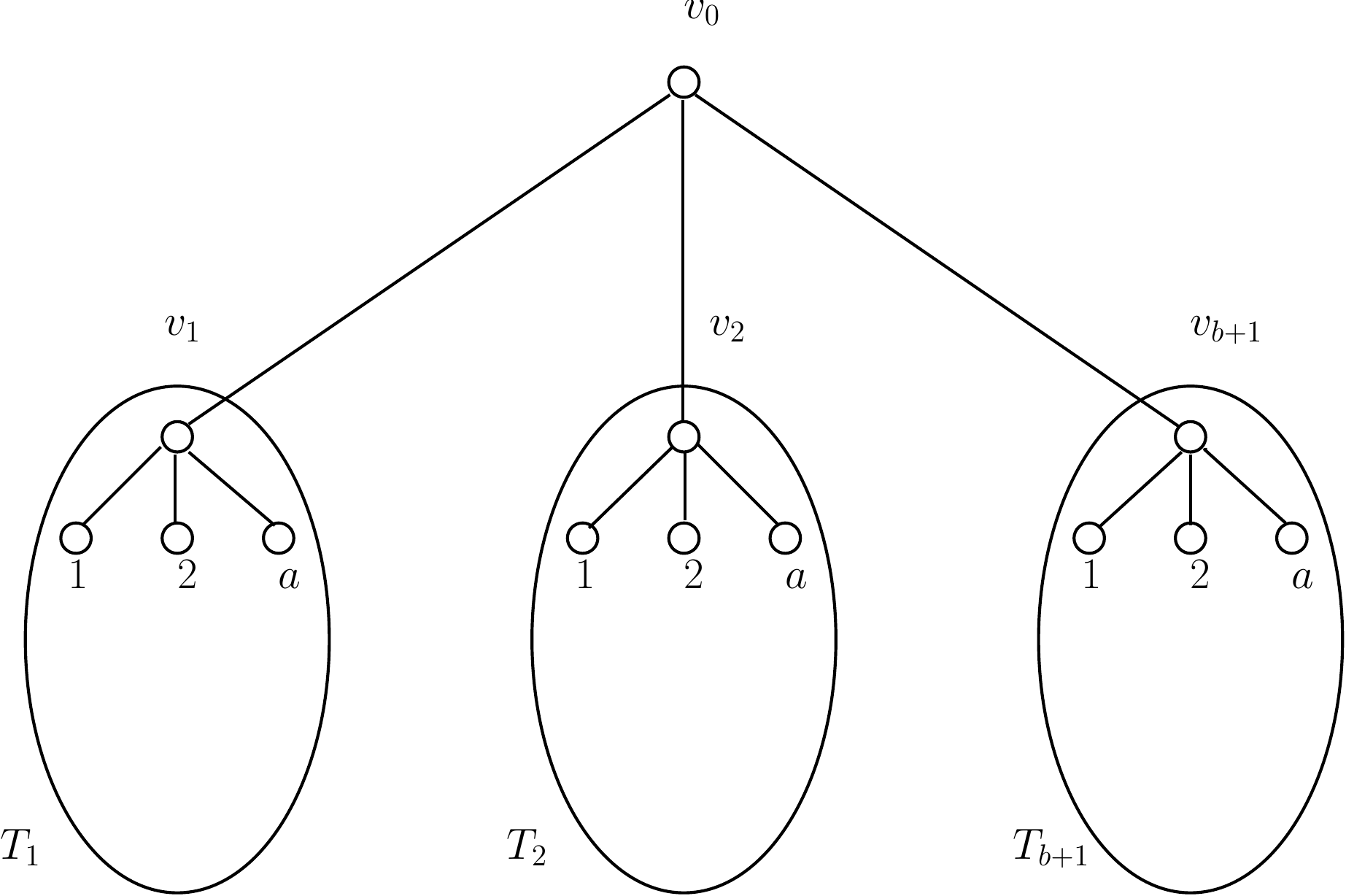}
\caption{Periodic tree with the root node $v_0$ and subtrees $T_1, T_2, \dots, T_{b+1}$ incident to the root.}
\label{fig:treeproof}
\end{figure}

Consider BP denoted by $\z^{(i)}_t$ that (starts and) runs only on $T_i$, instead of the entire tree $\tab$. 
Given $t \geq 0$, for $i=1,2,\dots,b+1$, the dynamics $\z_t^{(i)}(v_i)$ of the nodes $v_i \in T_i$ at time $t$, are i.i.d. random variables.

Now consider BP denoted by $\Xi$ that (starts and) runs on $\tab$. By symmetry and dynamics of BP, 
the process $\Xi(v_0)$ is the same in distribution for any choice of $v_0$. 
The node $v_0$ becomes active, $\Xi_{\infty}(v_0)=1$, if and only if: either (i) $\Xi_{0}(v_0)=1$, or (ii) $\sum_{i=1}^{b+1} \z_\infty^{(i)}(v_i) \geq \th$, given $\Xi_{0}(v_0)=0$. But given $\Xi_{0}(v_0)=0$, the event $\Xi_{\infty}(v_0)=0$ is equivalent to having at most $\th-1$ active neighbors, i.e., $\z_{\infty}^{(i)}(v_i)=1$. Moreover, the two BP processes: (1) $\Xi$ restricted to the tree $T_i$ given $\Xi_\infty(v_0)=0$, and (2) $\z^{(i)}_t$ (which runs on $T_i$ only) are equivalent. 
By this equivalence
\beqn
\label{eq:equivalence}
\nonumber
\pr \left( \sum_{i=1}^{b+1} \Xi_\infty(v_i) < \th : \Xi_0(v_0)=0\right) 
&=& \sum_{k=0}^{\th-1} {b+1 \choose k} \pr\left( \z_{\infty}^{(1)}(v_1)=1\right)^k \\
&& \quad \times \left(1-\pr\left( \z_{\infty}^{(1)}(v_1)=1\right) \right)^{b+1-k} \,.
\eeqn
Now, the probability that $v_0$ becomes active is given by
\begin{align}
\label{eq:KSI}
\nonumber
\pr \left( \Xi_{\infty}(v_0)=1 \right) &= \pr \left( \Xi_{0}(v_0)=1 \right) + \pr \left( \Xi_{0}(v_0)=0 \right) \\
& \quad \times \left( 1 - \pr \left( \sum_{i=1}^{b+1} \Xi_\infty(v_i) < \th : \Xi_0(v_0)=0\right) \right)\,,
\end{align}
therefore from~(\ref{eq:equivalence}),
\beqn
\label{eq:KSI}
\nonumber
\pr \left( \Xi_{\infty}(v_0)=1 \right) &=& p + (1-p) \sum_{k=\th}^{b+1} {b+1 \choose k} \pr\left( \z_{\infty}^{(1)}(v_1)=1\right)^k \\
&& \quad \times \left(1-\pr\left( \z_{\infty}^{(1)}(v_1)=1\right) \right)^{b+1-k} \,.
\eeqn
Equation~(\ref{eq:KSI}) expresses the probability that a randomly selected node $v_0$ in $\tab$ becomes active as a function of the probability of activation of the root node in a truncated unoriented subtree.

\textbf{Part 2.}
First, given the oriented edges in $\vec{T}_1$ and unoriented edges in $T_1$, it follows by
stochastic dominance that 
\beq
\label{eq:ll}
\pr \left(\vz_{\infty}^{(1)}(v_1) = 1 \right) \leq \pr \left(\z_{\infty}^{(1)}(v_1)= 1 \right) \,. 
\eeq
Next, we show that $\vz_{\infty}^{(1)}(v_1)=0$ implies $\z_{\infty}^{(1)}(v_1)=0$,
which will yield 
\beq
\label{eq:gg}
\pr \left(\vz_{\infty}^{(1)}(v_1) = 0 \right) \leq \pr \left(\z_{\infty}^{(1)}(v_1)= 0 \right) \,.
\eeq
The equivalence of activation in the directed and undirected trees will follow from~(\ref{eq:ll}) and~(\ref{eq:gg}). 

To show~(\ref{eq:gg}), 
we call a node $v$ in $\vec{T}_1$ {\it eventually-inactive} if $\vz_{\infty}^{(1)}(v)=0$ and {\it eventually-active} if $\vz_{\infty}^{(1)}(v)=1$.
Let us consider the root $v_1$ of $\vec{T}_1$. The node $v_1$ is eventually-inactive, $\vzin^{(1)}(v_1)=0$, if and only if $v_1$ is initially inactive and has at least $a-(\th-1)=a+1-\th$ eventually-inactive neighbors.
For $j \geq 0$, denote by $L_j$ the set of nodes at the level $j$ in $\vec{T}_1$. In other words, $L_0 = \{ v_1\}$, $L_1$ is the set of neighbors in $\vec{T}_1$ of the nodes in $L_0$, similarly $L_2$ is the set of neighbors in $\vec{T}_1$ of nodes in $L_1$, etc.  Every eventually-inactive node in $L_1$ has at most $\th-1$ eventually-active neighbors in $\vec{T}_1$. In other words, it has at least $b-(\th-1)=b+1-\th$ eventually-inactive neighbors from $L_2$. Given that $v_1 \in L_0$ is eventually-inactive, it follows that every eventually-inactive node in $L_1$ has at least $b+2-\th$ eventually-inactive neighbors in $\vec{T}_1$.
Similarly, every eventually-inactive node in $L_2$ has at least $a+2-\th$ eventually-inactive neighbors in $\vec{T}_1$.
Then, by mathematical induction on $j$, every eventually-inactive node in $L_j$ has at least $b+2-\th$ (respectively $a+2-\th$) eventually-inactive neighbors in $\vec{T}_1$, for odd $j$ (respectively even $j$). 

Hence $v_1$ is eventually-inactive in $\vz_t^{(1)}$ if there exists an eventually-inactive subtree $\vec{\mathcal{T}} \subseteq \vec{T}_1$, which consists of the root $v_1$, and previously recursively defined eventually-inactive nodes from $\vec{T}_1$. (Specifically, every node in the eventually-inactive three $\vec{\mathcal{T}}$ is inactive at time $t=0$.)

Now consider the unoriented BP $\z_t^{(1)}$ (on the tree $T_1$). Let $\mathcal{T}$ be unoriented copy of $\vec{\mathcal{T}}$. 
By construction of $\vec{\mathcal{T}}$, at time $t=0$, every node of $\mathcal{T}$ is inactive, and moreover has 
at least $b+2-\th$ (respectively $a+2-\th$) inactive neighbors in $T_1$, for odd $j$ (respectively even $j$).
That is, at time $t=0$, every node of $\mathcal{T}$ is inactive and has at most $\th-1$ active neighbors.
Therefore $\mathcal{T}$ is eventually-inactive under the unoriented BP $\z_t^{(1)}$, and specifically the root $v_1$ is eventually-inactive, $\zin^{(1)}(v_1)=0$. This yields~(\ref{eq:gg}), and thus
\beq
\label{eq:eqeq}
\pr \left(\vz_{\infty}^{(1)}(v_1) = 0 \right) = \pr \left(\z_{\infty}^{(1)}(v_1)= 0 \right) \,.
\eeq
Introducing $x_{\infty}:=\pr\left(\Xi_{\infty}(v_0)=1 \right)$ in~(\ref{eq:KSI}) and using~(\ref{eq:eqeq}) gives~(\ref{eq:propx}).

Analogously, one can prove the result given in~(\ref{eq:propy}) for the choice of a node $u_0$ of degree $a+1$, which concludes the proof.
\end{proof}

\begin{proposition}
\label{proposition:pf}
The percolation threshold on oriented and unoriented trees are the same:  
\beq
\vp_{f}=p_{f} \,. 
\eeq
\end{proposition}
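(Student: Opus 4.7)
The plan is to derive $p_f = \vp_f$ as a direct corollary of Theorem~\ref{prop2} and the dichotomy for the oriented tree already established in Theorem~\ref{thm:bp.orient}. The idea is that equations~(\ref{eq:propx}) and~(\ref{eq:propy}) relate $x_\infty, y_\infty$ to $\vx_\infty, \vy_\infty$ through maps of the form $\phi_{n,p,\th}$, and the event of hitting the value $1$ passes cleanly back and forth through these maps.

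First I would record the elementary observation that for every $p \in [0,1)$ and $x \in [0,1]$, one has $\phi_{n,p,\th}(x) = 1$ if and only if $x = 1$. Indeed $\phi_{n,p,\th}(1) = 1$ trivially, while for $x < 1$ the $k = 0$ contribution $(1-x)^n > 0$ is absent from the binomial sum in~(\ref{eq:phi}), so $\phi_{n,p,\th}(x) < p + (1-p) = 1$. Applying this observation to the right-hand sides of~(\ref{eq:propx}) and~(\ref{eq:propy}) yields, for every $p \in [0,1)$, the equivalences
\[
x_\infty = 1 \iff \vy_\infty = 1 \qquad \text{and} \qquad y_\infty = 1 \iff \vx_\infty = 1.
\]
The same observation applied to the fixed-point relations~(\ref{eq:req.sys.x.lim}) and~(\ref{eq:req.sys.y.lim}) gives $\vx_\infty = 1 \iff \vy_\infty = 1$. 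Chaining these three equivalences produces $x_\infty = 1 \iff \vx_\infty = 1$ for every $p \in [0,1)$.

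To conclude, Theorem~\ref{thm:bp.orient} furnishes a threshold $\vp_f \in (0,1)$ with $\vx_\infty = 1$ for $p > \vp_f$ and $\vx_\infty < 1$ for $p < \vp_f$. The equivalence just established transfers this dichotomy verbatim to $x_\infty$ (and symmetrically to $y_\infty$), forcing the unoriented critical value $p_f$ to coincide with $\vp_f$. The one point requiring some care, and which I expect to be the main (minor) obstacle, is to bridge the coordinate-wise condition $x_\infty = y_\infty = 1$ with the geometric statement that $\tab$ is fully active a.a.s.; this should follow from Kolmogorov's $0$-$1$ law applied to the tail event of full activation, together with the level-wise translation invariance of the BP dynamics on $\tab$, which together ensure that a positive per-node failure probability precludes a.s. full activation while $x_\infty = y_\infty = 1$ implies it by countable subadditivity.
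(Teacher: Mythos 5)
Your proposal is correct and follows essentially the same route as the paper, whose entire proof is the observation that equations~(\ref{eq:propx}) and~(\ref{eq:propy}) force $(x_\infty,y_\infty)=(1,1)$ if and only if $(\vx_\infty,\vy_\infty)=(1,1)$; your argument simply makes explicit the fact that $\phi_{n,p,\th}(x)=1$ iff $x=1$ for $p<1$, the chaining through the fixed-point relations, and the transfer of the dichotomy of Theorem~\ref{thm:bp.orient}. The additional care you flag about passing from $x_\infty=y_\infty=1$ to almost-sure full activation is a point the paper leaves implicit, so your elaboration only adds rigor rather than changing the approach.
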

\begin{proof}
From~(\ref{eq:propx}) and (\ref{eq:propy}) it follows that $(x_\infty,y_\infty)=(1,1)$ if and only if  $(\vx_\infty,\vy_\infty)=(1,1)$.
\end{proof}

\noindent
\textbf{Remark.} The proof readily generalizes for trees of periodicity greater than $2$ using analogous arguments. 

\section{Numerical evaluation of the critical probability $p_f$}
\label{sec:numerical}

In this section we present numerical values of $p_f$ for different values of $a=3,\dots,10$ for a non-trivial range of the threshold parameter $2 \leq \th \leq 9$ when $b=a$, $b=a+1$, and $b=a+2$, $b=2a$. Concretely, we numerically find the smallest $\vp_f$ such that the only solution of the recurrence system~(\ref{eq:req.sys1.lim}) and~(\ref{eq:req.sys2.lim}) is $(1,1)$ as justified by Theorem~\ref{thm:bp.orient}. The plots are shown in Figure~\ref{fig:numerics}. We observe that for a fixed $\th$, the critical threshold $p_f$ monotonically decreases for a fixed value of $a$ for increasing values of $b$, which agrees with expectation.

\begin{figure}[h]
\centering
\begin{tabular}{cc}
{ \includegraphics[scale=0.325]{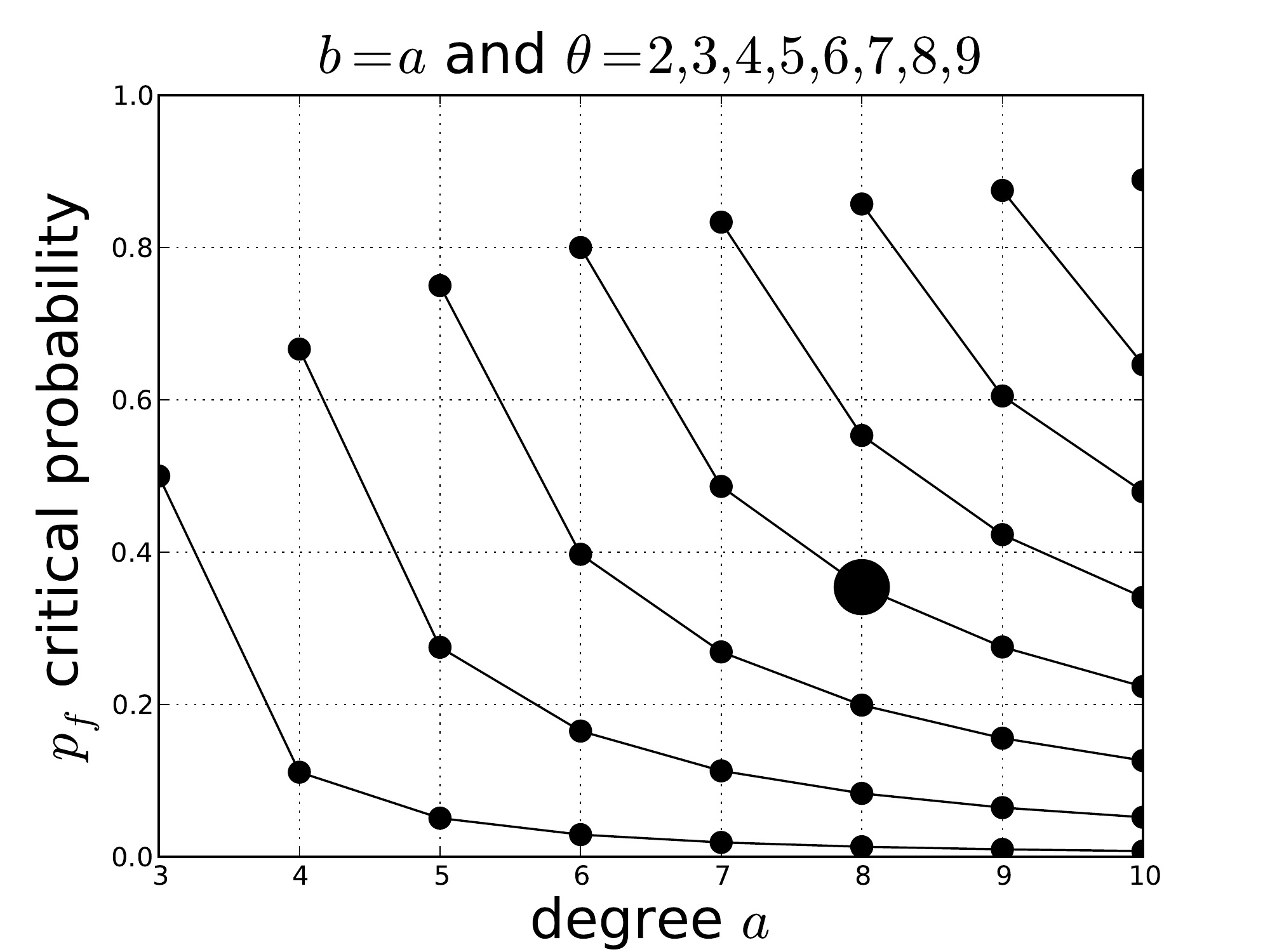} }
&
{ \includegraphics[scale=0.325]{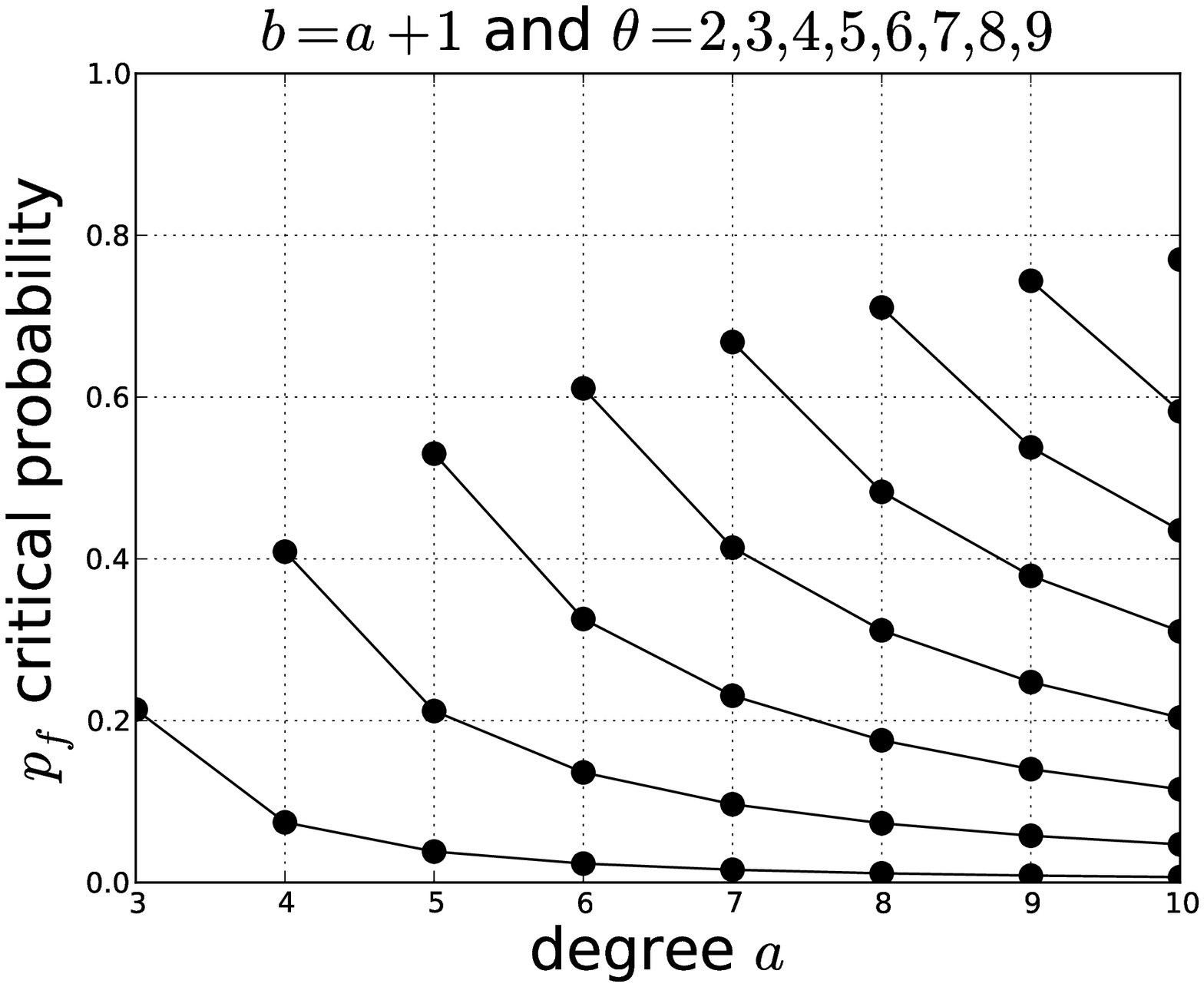} }
\\
{ \includegraphics[scale=0.325]{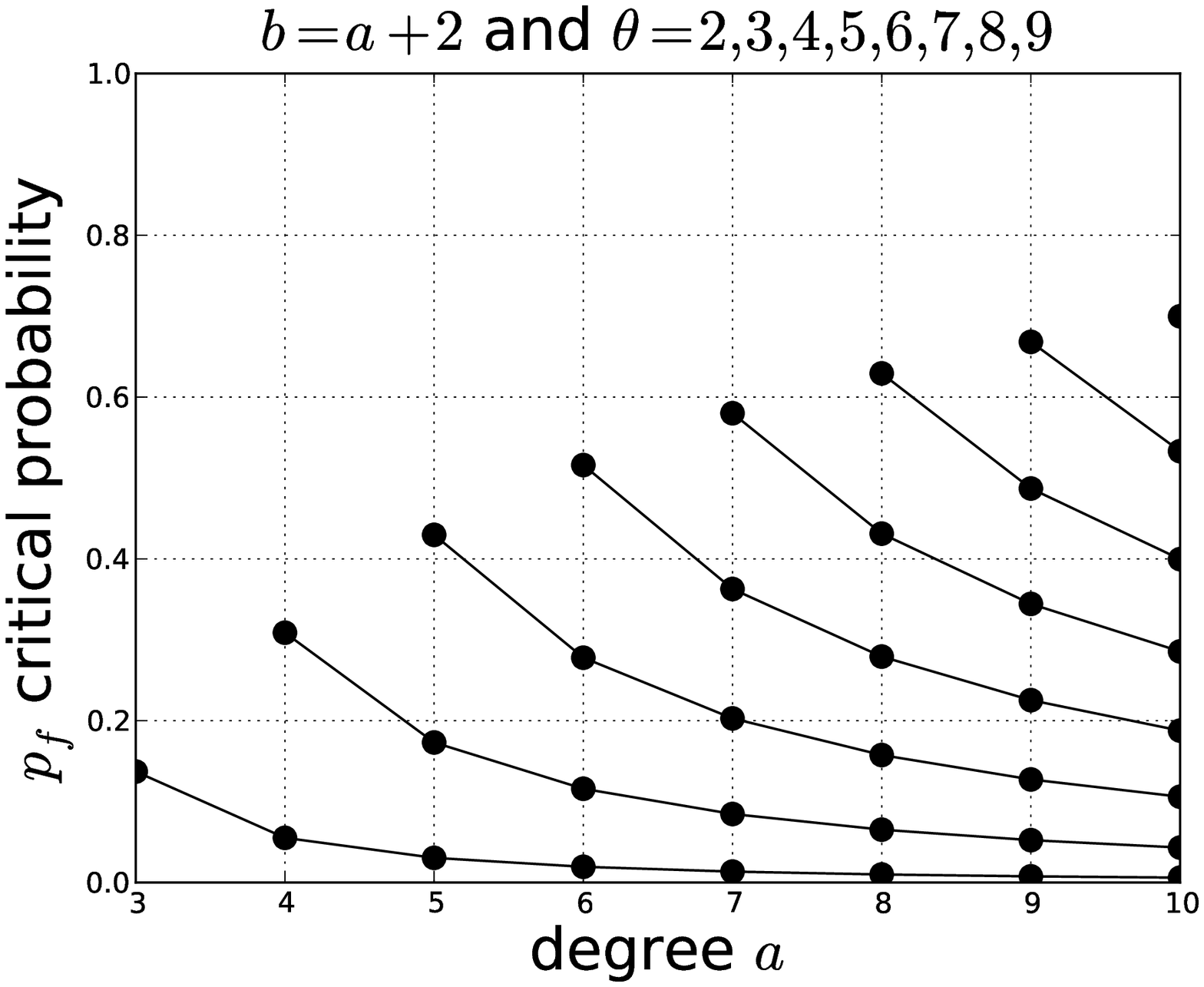} }
&
{ \includegraphics[scale=0.325]{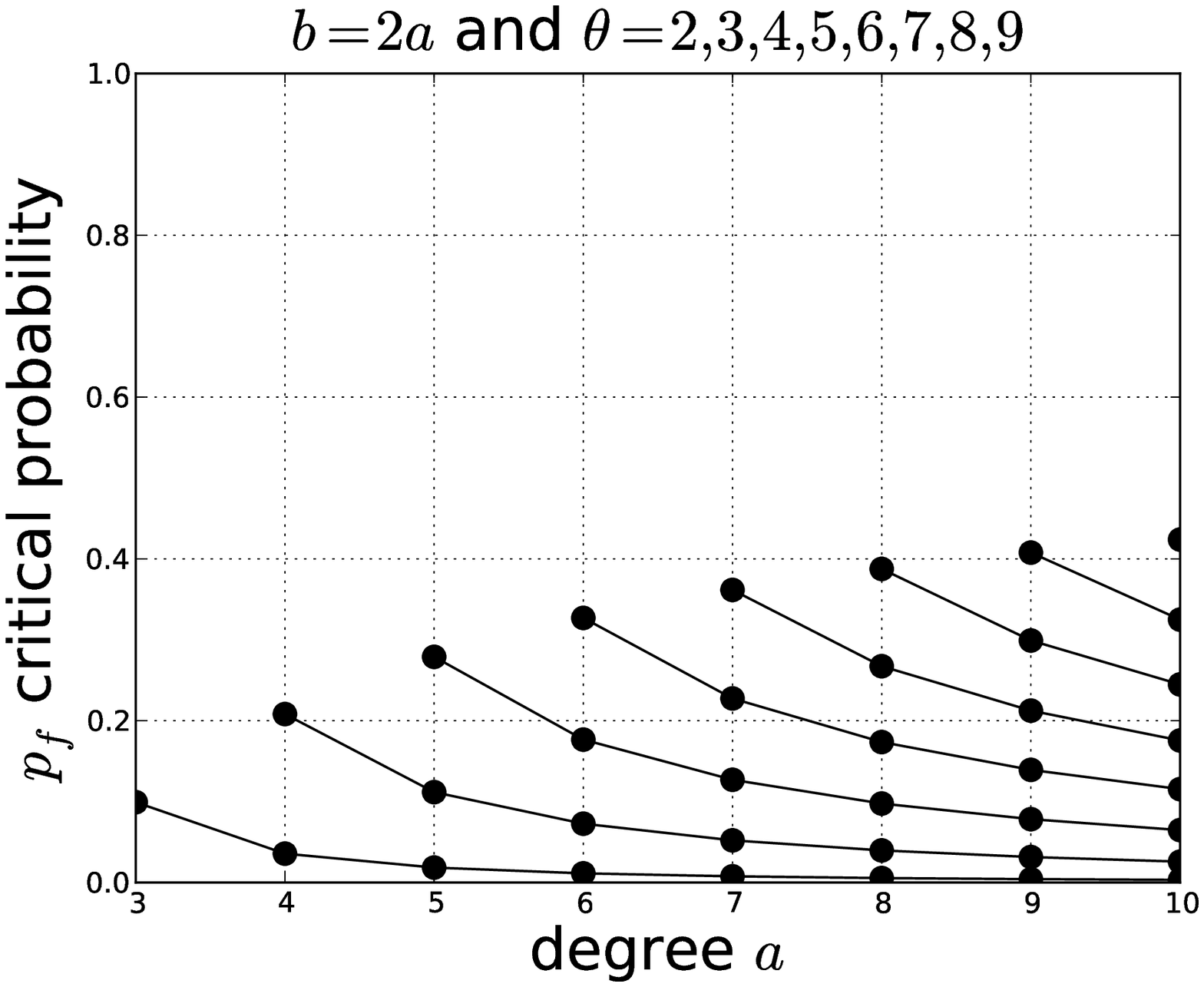} }
\end{tabular}
\caption{Numerical evaluation of the critical threshold $p_f$ for different values of $a$ and $b$ in the two-periodic tree $T_{a,b}$. 
Each curve represents $p_f$ for a given $2 \leq \th \leq 9$ and higher curves correspond to higher values of $\th$. The value of $p_f$
for $a=b=8$ and $\th=5$ corresponds to Figures~\ref{fig:curve03} and~\ref{fig:curve04}.} 
\label{fig:numerics}
\end{figure}

\section*{Acknowledgements}
This work was supported by the AFOSR grant no.~FA9550-11-1-0278 and the NIST grant no.~60NANB10D128. 

\bibliographystyle{acm}
\bibliography{boot}
\end{document}